			\renewcommand{\PrintDOI}[1]{DOI \doi{#1}}
\newtheorem*{rep@theorem}{\rep@title}
\newcommand{\newreptheorem}[2]{%
\newenvironment{rep#1}[1]{%
 \def\rep@title{#2 \ref{##1}}%
 \begin{rep@theorem}}%
 {\end{rep@theorem}}}
			\newtheorem{theorem}{Theorem}
			\newtheorem{lemma}{Lemma}
			\newtheorem{proposition}{Proposition}
			\newtheorem{corollary}{Corollary}
			\theoremstyle{definition}
			\theoremstyle{remark}
			\newtheorem{remark}{Remark}
			\DeclareMathOperator{\Pn}{Poisson}
			\DeclareMathOperator{\Var}{Var}
			\DeclareMathOperator{\Cov}{Cov}
			\providecommand{\Prob}[1]{\mathbb{P}\{#1\}}
			\providecommand{\card}[1]{\texttt{\#}#1}
			\providecommand{\norm}[1]{\lVert#1\rVert}
			\newcommand{\Ex}{\mathbb{E}}
			\providecommand{\Exc}[2]{\mathbb E\left[#1\middle| #2\right]}
			\newcommand{\Ewens}[1]{\text{Ewens}(#1)}
			\providecommand{\abs}[1]{\lvert#1\rvert}
			\providecommand{\risfac}[2]{#1^{\overline{#2}}} 
\begin{document}
	\title[Number of cycles in random permutations]{The number of cycles in a random permutation and the number of segregating sites\\ jointly converge to the Brownian sheet}

	\author{Helmut H.~Pitters}
	\address{Mathematical Institute, University of Mannheim}
	\email{helmut.pitters@mail.uni-mannheim.de}
	\thanks{The author acknowledges financial support by the DFG RTG 1953.}
			
	
			\begin{abstract}
  Consider a random permutation of $\{1, \ldots, \lfloor n^{t_2}\rfloor\}$ drawn according to the Ewens measure with parameter $t_1$ and let $K(n, t)$ denote the number of its cycles, where $t\equiv (t_1, t_2)\in\mathbb [0, 1]^2$.
  
  Next, consider a sample drawn from a large, neutral population of haploid individuals subject to mutation under the infinitely many sites model of Kimura whose genealogy is governed by Kingman's coalescent. Let $S(n, t)$ count the number of segregating sites in a sample of size $\lfloor n^{t_2}\rfloor$ when mutations arrive at rate $t_1/2$.
  
  Our main result comprises two different couplings of the above models for all parameters $n\geq 2,$ $t\in [0, 1]^2$ such that in both couplings one has weak convergence of processes as $n\to\infty$
\begin{align*}
  \left\{\frac{(K(n, s), S(n, t))-(s_1s_2, t_1t_2)\log n}{\sqrt{\log n}}, s, t\in [0, 1]^2\right\}\to\{(\mathscr B(s), \mathscr B(t)), s, t\in [0, 1]^2\},
\end{align*} where $\mathscr B$ is a one-dimensional Brownian sheet. This generalises and unifies a number of well-known results.

\end{abstract}

		\maketitle

	2010 {\sl Mathematics subject classification.} 60B10, 60B15, 60F17 (primary), 60J70 (secondary)\\
	
	{\sl Keywords:} random permutation, Ewens measure, segregating sites, Brownian sheet, functional central limit theorem
		
			
\section{Introduction}
Fix a natural number $n\in\mathbb N\coloneqq \{1, 2, \ldots \}$ and a real number $t_1>0$. A random $(n, t_1)$ Ewens permutation is an element chosen at random in the symmetric group $\mathfrak S_n$ on the numbers $1, \ldots, n$ such that $\sigma\in\mathfrak S_n$ is picked with probability
\begin{align*}
  \frac{t_1^{\card\sigma}}{\risfac{t_1}{n}}.
\end{align*}
Here $\card\sigma$ denotes the number of cycles in $\sigma$, and for any real $x\in\mathbb R$  and $n\in\mathbb N$ denote by $\risfac{x}{n}\coloneqq x(x+1)\cdots (x+n-1)$ the $n$th rising factorial power of $x$. We study the asymptotic  behaviour for large $n$ of the random number $K(n, t)$ of cycles in a $(\lfloor n^{t_2}\rfloor, t_1)$ Ewens permutation, where $t\equiv (t_1, t_2)\in\mathbb R_+^2$.

Moreover, we consider the so-called Kingman $n$-coalescent or $n$-coalescent for short. This process first appeared in the context of population genetics where it is used to model the genealogy of a sample of size $n$ drawn from a large neutral population of haploid individuals. There are two well-known couplings of the sequence of $n$-coalescent processes for all $n\geq 2$, the natural coupling and the temporal coupling. Unless otherwise specified, the quantities that we study and our results hold true both in the natural coupling and in the temporal coupling. The individuals are subject to mutations under the infinitely many sites model of Kimura. Here we focus on the random nuber $S(n, t)$ of mutations affecting a sample of size $\lfloor n^{t_2}\rfloor$ when mutations arrive at rate $t_1/2>0$.

Our first main result provides a coupling of $(K(n, t), S(n, t))$ simultaneously for all parameters $n\geq 2$ and $t$ as follows. Let $\mathfrak P$ denote a Poisson point process on $\mathbb R_+^2$ with intensity $\lambda\otimes \frac{1}{2}\lambda$, where $\lambda$ denotes Lebesgue measure on $\mathbb R.$ Moreover, let $(L_n, n\geq 2)$ denote the total branch lengths of the $n$-coalescent tree with the agreement that $L_1\coloneqq 0$, such that $(L_n, n\geq 2)$ is independent of $\mathfrak P$.

For any $t\in \mathbb R_+^2$ and $k\geq 2$ set
\begin{align}
M_k(t_1) \coloneqq \card(\mathfrak P\cap [L_{k-1}, L_k)\times [0, t_1)),
\intertext{and}
B_k(t_1)\coloneqq \mathbf 1\{M_k(t_1)\geq 1\}.
\end{align}

\begin{theorem}\label{thm:coupling}
	For any $n\geq 2$ and $t\in\mathbb R_+^2$ we have
	\begin{align}\label{eq:definitions_k_s}
	K(n, t) =_d 1+\sum_{k=2}^{\lfloor n^{t_2}\rfloor}B_k(t_1), \qquad S(n, t)=_d\sum_{k=2}^{\lfloor n^{t_2}\rfloor} M_k(t_1).
	\end{align}
\end{theorem}
\begin{remark}
	Theorem~\ref{thm:coupling} in fact entails two couplings of $K(n, t)$ and $S(n, t)$, depending on whether the sequence $(L_n, n\geq 2)$ stems from the natural coupling or the temporal coupling of the $n$-coalescent processes. In the temporal coupling the summands $B_k$ and $M_k$ in equation~\eqref{eq:definitions_k_s} are independent.
\end{remark}
\begin{remark}
	From the point of view of population genetics the quantity $K(n, t)$ may be interpreted as the number of alleles in the infinitely many alleles model stemming from mutations that are spread at rate $t_1/2$ in the $\lfloor n^{t_2} \rfloor$-coalescent above.
\end{remark}

Let $\{\mathscr B(s), s\in[0, 1]^2\}$ denote a standard Brownian sheet on the unit square. Our second main result shows weak convergence of the joint processes $(K(n, s), S(n, t))$ after suitable rescaling as $n\to\infty$.
\begin{theorem}\label{thm:main_result}
As $n\to\infty$ we have weak convergence of processes
	\begin{align}
	\left\{\frac{(K(n, s), S(n, t))-(s_1s_2, t_1t_2)\log n}{\sqrt{\log n}}, s, t\in [0, 1]^2\right\}\to\{(\mathscr B(s), \mathscr B(t)), s, t\in [0, 1]^2\},
	\end{align}
on the space $D_4$ in the S-topology, where $\mathscr B$ is a one-dimensional Brownian sheet on the unit square.
\end{theorem}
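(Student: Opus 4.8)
The plan is to reduce the joint statement to the two marginal functional CLTs, Theorems~\ref{thm:functionalclt_number_of_cycles} and~\ref{thm:functialclt_segregating_sites}, by exploiting the coupling of Section~\ref{sec:coupling}. In that coupling $K(n,\cdot)$ and $S(n,\cdot)$ are manufactured from one and the same Poisson process $\mathfrak P$, and the key point is that they differ, uniformly over all parameters, by an amount bounded by a single almost surely finite random variable that does not depend on $n$. After dividing by $\sqrt{\log n}$ this makes the normalised cycle-count process and the normalised segregating-sites process share the same scaling limit path by path, so the joint limit is supported on the diagonal and is driven by a single Brownian sheet. Concretely I would first prove this comparison, and then deduce the weak limit on $D_4$ by checking convergence of the finite-dimensional distributions together with tightness for the S-topology.

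Step 1: comparison of $K$ and $S$. Put $R_k(t_1)\coloneqq(M_k(t_1)-1)^+\ge 0$ and $R\coloneqq\sum_{k\ge 2}R_k(1)$. Since $M_k(t_1)-\mathbf 1\{M_k(t_1)\ge 1\}=(M_k(t_1)-1)^+$, the representation~\eqref{eq:definitions_k_s} gives
\begin{align*}
  S(n,s)-\bigl(K(n,s)-1\bigr)=\sum_{k=2}^{\lfloor n^{s_2}\rfloor}\bigl(M_k(s_1)-B_k(s_1)\bigr)=\sum_{k=2}^{\lfloor n^{s_2}\rfloor}R_k(s_1).
\end{align*}
As $M_k(t_1)$ counts the points of $\mathfrak P$ in a region that grows with $t_1$, both $M_k(t_1)$ and $R_k(t_1)$ are non-decreasing in $t_1$; hence, using $s_1\le 1$ and $R_k\ge 0$, one gets $0\le S(n,s)-(K(n,s)-1)\le R$ for every $n$ and every $s\in[0,1]^2$. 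By Theorem~\ref{thm:coupling} the $M_k(1)$ are independent geometric variables with $\Ex M_k(1)=1/(k-1)$ and $\Prob{M_k(1)\ge 1}=1/k$, so $\Ex R_k(1)=\Ex M_k(1)-\Prob{M_k(1)\ge 1}=1/\bigl((k-1)k\bigr)$ and $\Ex R=\sum_{k\ge 2}1/\bigl((k-1)k\bigr)=1$, so $R<\infty$ almost surely. Therefore
\begin{align*}
  \sup_{s\in[0,1]^2}\abs{K(n,s)-S(n,s)}\le 1+R\qquad\text{almost surely, for every }n.
\end{align*}

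Step 2: passage to the limit. Set $X_n(s)\coloneqq\bigl(K(n,s)-s_1s_2\log n\bigr)/\sqrt{\log n}$, $Y_n(t)\coloneqq\bigl(S(n,t)-t_1t_2\log n\bigr)/\sqrt{\log n}$, and $\widetilde X_n(s)\coloneqq\bigl(S(n,s)-s_1s_2\log n\bigr)/\sqrt{\log n}$. Step~1 gives $\sup_s\abs{X_n(s)-\widetilde X_n(s)}\le(1+R)/\sqrt{\log n}\to 0$ almost surely. The pair process $(\widetilde X_n,Y_n)$ is just the single normalised process $S(n,\cdot)$ evaluated at the arguments $s$ and $t$: for any finite sets of points $s^{(1)},\dots,s^{(a)}$ and $t^{(1)},\dots,t^{(b)}$ the joint law of $\bigl((\widetilde X_n(s^{(i)}))_i,(Y_n(t^{(j)}))_j\bigr)$ is the joint law of the normalised $S$-process at the $a+b$ points $s^{(1)},\dots,s^{(a)},t^{(1)},\dots,t^{(b)}$, which by Theorem~\ref{thm:functialclt_segregating_sites} converges to the corresponding finite-dimensional distribution of $(s,t)\mapsto(\mathscr B(s),\mathscr B(t))$. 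Combining this with the uniform bound on $X_n-\widetilde X_n$ and Slutsky's lemma in finite dimensions, the finite-dimensional distributions of $(X_n,Y_n)$ converge to those of $(s,t)\mapsto(\mathscr B(s),\mathscr B(t))$. Tightness of $(X_n,Y_n)$ in $D_4$ for the S-topology follows coordinatewise from Theorems~\ref{thm:functionalclt_number_of_cycles} and~\ref{thm:functialclt_segregating_sites}. Since the limit process is continuous, hence determined by its finite-dimensional distributions, the tightness-plus-finite-dimensional-distributions criterion for the S-topology recalled in Section~\ref{sec:multiparameter_processes} yields $(X_n,Y_n)\to(\mathscr B(s),\mathscr B(t))$ weakly on $D_4$, which is the assertion.

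The probabilistic core of this argument — the uniform bound $\abs{K(n,\cdot)-S(n,\cdot)}\le 1+R$ — is elementary once Theorem~\ref{thm:coupling} is available, and it is precisely what forces one and the same sheet to appear in both coordinates. I expect the genuine work to be the S-topological bookkeeping of Step~2: that tightness for the S-topology passes from the two coordinates to the pair on $D_4$, that the tightness-plus-finite-dimensional-distributions criterion applies with the (continuous) pair $(s,t)\mapsto(\mathscr B(s),\mathscr B(t))$ as limit, and that replacing $\widetilde X_n$ by $X_n$ — a perturbation vanishing uniformly, hence in probability — does not affect convergence in law; none of this is automatic, since the S-topology is not metrizable and the usual converging-together lemma has to be replaced by the facts assembled in Section~\ref{sec:multiparameter_processes}.
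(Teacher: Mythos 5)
Your proposal is correct in substance, but it reaches Theorem~\ref{thm:main_result} by a genuinely different route than the paper. For the finite-dimensional distributions the paper works in $L^2$: it computes $\Cov(\mathscr K(n,s),\mathscr S(n,t))$ (Lemma~\ref{lem:covariances}), deduces $\mathscr S(n,t)-\mathscr K(n,t)\to 0$ in $L^2$ (Lemma~\ref{lem:convergence_of_difference}), and then combines the Cram\'er--Wold device for the $S$-process with Slutsky. You instead use the pathwise estimate $\sup_{n}\sup_{s\in[0,1]^2}\abs{K(n,s)-S(n,s)}\le 1+R$ with $R=\sum_{k\ge 2}(M_k(1)-1)^+$ and $\Ex R=1$; this is the monotonicity trick the paper itself uses in the proof of Theorem~\ref{thm:functionalclt_number_of_cycles}, transplanted to the joint setting, and it is a valid (indeed stronger and arguably cleaner) substitute for Lemmas~\ref{lem:covariances} and~\ref{lem:convergence_of_difference}; it also correctly forces one and the same sheet into both coordinates. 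For tightness the paper does not pass through the marginal theorems: it proves a fresh joint moment bound, using that $K$-increments count a subset of the Poisson points so $\mathscr K(n,B)\le\mathscr S(n,B)$, and then invokes Proposition~\ref{prop:bickel-wichura} together with the extension of Theorem~\ref{thm:bickel_wichura3} on the grids $T_n$ of~\eqref{def:t_n2}.

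The one step you assert rather than prove is precisely the coordinatewise inheritance of tightness, and it should be closed explicitly, though it does go through here: since the first coordinate of $(s,t)\mapsto(\mathscr K(n,s),\mathscr S(n,t))$ depends only on the directions $p\in\{1,2\}$ and the second only on $p\in\{3,4\}$, one has $w_\delta''\bigl((\mathscr K,\mathscr S)(n,\cdot)\bigr)=\max\bigl(w_\delta''(\mathscr K(n,\cdot)),\,w_\delta''(\mathscr S(n,\cdot))\bigr)$, and the two marginal modulus conditions are supplied by the necessity half of the Bickel--Wichura criterion applied to Theorems~\ref{thm:functionalclt_number_of_cycles} and~\ref{thm:functialclt_segregating_sites}, whose limits are continuous. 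With that observation added your argument is complete; what the paper's route buys is an explicit quantitative increment bound (and it is where the restricted grids $T_n$ are really needed), while yours buys economy, reusing the marginal FCLTs and dispensing with the covariance computation altogether.
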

Our proof relies on the asymptotic behaviour of the total branch lengths of the $n$-coalescent in the temporal coupling, $L_n^\dagger$, and in the natural coupling, $L_n^\star$. It is well-known that in the temporal coupling convergence holds almost surely and in $L^p$ for any $p>0$,
\begin{align*}
  L_n^\dagger-2\log n\to 2\mathscr L^\dagger
\end{align*}
as $n\to\infty,$ where $\mathscr L^\dagger$ obeys a standard Gumbel law, that is the law with distribution function $\exp(-\exp(-x)),$ $x\in\mathbb R,$ on the real line, cf.~\cite{MoehlePitters2015}. In the natural coupling, Pfaffelhuber, Wakolbinger and Weisshaupt in~\cite{PfaffelhuberWakolbingerWeisshaupt2011} showed convergence
\begin{align*}
L_n^\star-2\log n\to 2\mathscr L^\star
\end{align*}
in $L^2$ as $n\to\infty$. The limit $\mathscr L^\star$ also obeys a standard Gumbel law. We complement these results as follows.
\begin{theorem}\label{thm:branch_lengths_convergence}
	The convergence
	\begin{align}
	L_n^\star -2\log n\to 2\mathscr L^\star
	\end{align}
	of the total branch lengths in the natural coupling holds almost surely. Moreover, the limits $\mathscr L^\dagger$ and $\mathscr L^\star$ are equal almost surely.
\end{theorem}

This work is layed out as follows. In section~\ref{subsec:cycles} we give a formal definition of a Ewens permutation. We review a collection of well-known results on the asymptotic behaviour of $K(n, t)$ and present our results on the functional CLT for $K(n, t)$. In section~\ref{subsec:sites} we define the number of segregating sites $S(n, t).$ We review its law of large numbers and the central limit theorem obtained by Watterson and present our functional CLT for $S(n, t)$. In section~\ref{sec:coupling} we provide the coupling of $K(n, t)$ and $S(n, t)$ and show their joint convergence. The proofs of our results are found in section~\ref{sec:proofs}.

\section{Results}
We review some classical limit theorems on the number of cycles in a Ewens permutation and the number of segregating sites. Next we present our results that generalise these theorems. We write $X^\dagger$ for a quantity considered in the temporal coupling, and $X^\star$ for the corresponding quantity considered in the natural coupling. We write $X$ if the statement about $X$ holds true both in the natural coupling and in the temporal coupling.

\subsection{Number of cycles in a random permutation}\label{subsec:cycles}
Random permutations and their cycle structure have been studied extensively and have a long history. One of the most celebrated families of probability measures on the symmetric group $\mathfrak S_n$ is the so-called Ewens measure parameterised by some parameter $t_1> 0.$ We say that $\Sigma(n)\equiv\Sigma(n, t_1)$ is governed by the $(n, t_1)$ Ewens distribution on $\mathfrak S_n$ if for any $\sigma\in\mathfrak S_n$
\begin{align}\label{eq:def_ewens}
  \Prob{\Sigma(n)=\sigma} &= \frac{t_1^{\card\sigma}}{\risfac{t_1}{n}}\mathds 1\{\sigma\in\mathfrak S_n\}.
\end{align}
In this case we write $\Sigma(n)\sim\Ewens{n, t_1}$. Here for any event $A$, $\mathds 1A$ denotes its indicator which equals one if $A$ occurs and zero otherwise. We agree on setting $\Sigma(n, 0)$ to be the identity map on $\{1, \ldots, n\}$.

\begin{remark}
	Our notation differs from the notation in the literature where the parameter $t_1$ is usually denoted by $\theta.$
\end{remark}

The so-called Feller's coupling, cf.~\cite{ArratiaBarbourTavare2003}, provides a concrete construction of a sequence $\Sigma^F(1), \Sigma^F(2), \ldots$ of random permutations on a common probability space, such that for any $n$ the random permutation $\Sigma^F(n)$ obeys a $(n, t_1)$ Ewens distribution. The superscript $F$ indicates the Feller's coupling of the sequence of random permutations. To this end, consider the standard cycle notation of a permutation. Then $\Sigma^F(1)$ is nothing but the cycle $(1)$. The label $2$ is either inserted in this cycle to the right of $1$ with probability $1/(t_1+1),$ or forms a new cycle with probability $t_1/(t_1+1),$ and one obtains $\Sigma^F(2)$. If $\Sigma^F(k)$ is constructed for some $k\in\mathbb N,$ the label $k+1$ is placed to the right of a randomly chosen integer in $1, \ldots, k$ in the cycle containing this integer, or forms a new cycle with probability $t_1/(t_1+k-1)$. Setting
\[ 
\xi_k\coloneqq \mathds 1\{\text{ label $k$ starts a new cycle } \},\qquad k\geq 2,
\]
$\xi_2, \xi_3, \ldots$ is a sequence of independent Bernoulli r.v.s with $\xi_k$ having success probability $t_1/(t_1+k-1)$. Then $K^F(n, t_1)\coloneqq\card\Sigma(n)$ is nothing but the sum $1+\xi_2+\cdots+\xi_n$.
It turns out that the $K^\dagger(n, t),$ $n\geq 2,$ that we derive from the temporal coupling of the $n$-coalescents have the same dependency structure as the $K^F(n, t)$ derived from Feller's coupling.

In what follows we focus on $K(n)\coloneqq \card\Sigma(n)$, the number of cycles in the random permutation $\Sigma(n)$. Feller's coupling shows
\begin{align}
  \Ex K_n &= \sum_{k=1}^n \frac{t_1}{t_1+k-1},\qquad \Var(K_n) = \sum_{k=2}^n \frac{t_1(k-1)}{(t_1+k-1)^2}.
\end{align}

More particularly, we are interested in the asymptotic behaviour of $K(n)$ for large $n$. However, for properties of $K(n)$ for fixed $n$ the reader is referred to~\cite[Section 5.2]{ArratiaBarbourTavare2003}. Both $\Sigma(n)$ and $K(n)$ depend on the parameter $t_1$, but we drop this dependency in our notation until later.


Korwar and Hollander~\cite{KorwarHollander1973} proved a strong law of large numbers
\begin{align}
  \frac{K(n)}{\log n}\to_{a.s.} t_1,
\end{align}
as $n\to\infty,$ where $\to_{a.s.}$ denotes almost sure convergence. In their work the $K(n)$ are distinct observations in a sample of size $n$ from a Dirichlet process and thus coupled in $n$. For $t_1=1$ Goncharov~\cite{Goncharov1944} and Shepp and Lloyd~\cite{SheppLloyd1966} proved, using generating functions, the central limit theorem (CLT)
\begin{align}
  \frac{K(n)-\log n}{\sqrt{\log n}}\to_d N,
\end{align}
as $n\to\infty,$  where $\to_d$ denotes convergence in distribution and $N$ is a standard Gaussian random variable. This CLT was also shown by Feller~\cite{Feller1945} and R\'{e}nyi~\cite{Renyi1962} via the Lindeberg-Feller CLT, and by Kolchin~\cite{Kolchin1971, Kolchin1986} using a representation with random allocations of particles into cells.
To proceed we need more sophisticated notation, namely for $t\equiv (t_1, t_2)\in [0, 1]^2$ let $K(n, t)$ denote the number of cycles in $\Sigma(\lfloor n^{t_2}\rfloor, t_1)$.

For Feller's coupling, Hansen~\cite{Hansen1990} proved a functional CLT, namely weak convergence on the Skorokhod space $D_1$ to the Wiener measure as $n\to\infty$ of the process
\begin{align}
  \left\{\frac{K^F(n, t)-t_1t_2\log n}{\sqrt{t_1\log n}}, t_2\in [0, 1]\right\}.
\end{align}
Alternative proofs of this result were given by Donnelly, Kurtz and Tavar\'{e}~\cite{DonnellyKurtzTavare1991}, and Arratia and Tavar\'{e}~\cite{ArratiaTavare1992}.
For the special case $t_1=1$ this functional central limit theorem was obtained earlier by DeLaurentis and Pittel~\cite{DeLaurentisPittel1985}.

We can now state our first result, Theorem~\ref{thm:functionalclt_number_of_cycles}, which is a functional CLT for $K(n)$ in both the natural coupling and the temporal coupling of $K(n, t)$ in all parameters $n$ and $t$ that we provide in section~\ref{sec:coupling}. Theorem~\ref{thm:functionalclt_number_of_cycles} generalises and unifies the asymptotic results on $K(n)$ in the literature that we have cited above. It follows directly from our main result, Theorem~\ref{thm:main_result}.

\begin{theorem}[Functional CLT for the number of cycles]\label{thm:functionalclt_number_of_cycles}
  As $n\to\infty$ we have weak convergence of processes
  \begin{align}\label{eq:brownian_sheet_fdd}
    \left\{\frac{K(n, t)-t_1t_2\log n}{\sqrt{\log n}}, t\in\mathbb [0, 1]^2 \right\} \to \{\mathscr B(t), t \in\mathbb [0, 1]^2\}
  \end{align}
  on the space $D_2$ in the S-topology, where $\mathscr B$ is a one-dimensional Brownian sheet on the unit square.
\end{theorem}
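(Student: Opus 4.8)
The plan is to establish weak convergence of the two-parameter process by the standard route: prove convergence of finite-dimensional distributions to those of the Brownian sheet, and then argue tightness (relative compactness) in the S-topology on $D_2$. The key structural input is the Feller coupling, which will be set up in Section~\ref{sec:coupling}: for a single $n$, the number of cycles $K(n) = \sum_{j=1}^{n} \xi_j$, where the $\xi_j$ are independent Bernoulli variables with $\Prob{\xi_j = 1} = t_1/(t_1 + j - 1)$. The point of the coupling in \emph{both} $n$ and $t$ is to build, on one probability space, a consistent family so that $K(n, t) = K(\lfloor n^{t_2}\rfloor, t_1)$ is a partial sum of such independent indicators where increasing $t_2$ appends more summands and increasing $t_1$ re-weights them; the joint law in $(t_1, t_2)$ is then governed by a sum of independent but non-identically-distributed, and across $t_1$ positively correlated, Bernoullis.

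First I would compute the mean and covariance structure. Using $\Ex[\xi_j] = t_1/(t_1+j-1)$ and $\sum_{j\le m} t_1/(t_1+j-1) = t_1\log m + O(1)$, one gets $\Ex[K(n,t)] = t_1 t_2 \log n + O(1)$, which explains the centering. For the variance, $\Var(\xi_j) = t_1(j-1)/(t_1+j-1)^2$, so $\Var(K(n,t)) = t_1 t_2 \log n + O(1)$; hence dividing by $\sqrt{\log n}$ (not $\sqrt{t_1\log n}$, which is why the normalisation here differs from Hansen's) gives asymptotic variance $t_1 t_2$. The cross-covariances of increments in disjoint rectangles of $[0,1]^2$ vanish because the Bernoullis indexed by disjoint index ranges are independent, and for nested $t_1$ the covariance of $K(n,(t_1,t_2))$ and $K(n,(t_1',t_2))$ works out (after the same harmonic-sum asymptotics) to $(t_1\wedge t_1')\,t_2\log n$, matching the Brownian sheet covariance $\min(s_1,t_1)\min(s_2,t_2)$. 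Convergence of the one-dimensional (and then joint, via the Cramér–Wold device) distributions then follows from the Lindeberg–Feller central limit theorem for triangular arrays of independent summands, exactly as in the classical proof of the CLT for $K(n)$ cited above; the Lindeberg condition is immediate since each $\xi_j$ is bounded by $1$ and the variances sum to order $\log n \to \infty$.

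The main obstacle is tightness in the S-topology on $D_2$. I would handle this by exploiting the monotone structure: for fixed $t_1$ the process $t_2 \mapsto K(n, t)$ is nondecreasing, and for fixed $t_2$ the dependence on $t_1$ is again controlled by the explicit Bernoulli weights, so after centering one is looking at a centred process with a one-dimensional "time" change in each coordinate. The S-topology (Jakubowski's topology) is well suited here precisely because it requires weaker tightness criteria than Skorokhod's $J_1$ — in particular it is tailored to sums of the form at hand and does not penalise the "many small jumps" that a lattice-indexed partial-sum process exhibits. I would verify Jakubowski's criteria: a uniform bound on $\Ex\,\sup_{t}|(K(n,t)-t_1t_2\log n)/\sqrt{\log n}|$ (which follows from an $L^2$ maximal inequality applied to the partial sums, e.g. a Doob/Kolmogorov-type inequality in each coordinate direction), together with the condition controlling the number of up-down oscillations across any level, which for a coordinate-wise monotone-plus-drift decomposition reduces to controlling a single monotone part. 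Combining finite-dimensional convergence with this tightness yields weak convergence on $D_2$ in the S-topology, and identifying the limit via its covariance as the Brownian sheet $\mathscr B$ completes the argument.
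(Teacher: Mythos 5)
Your finite-dimensional part is essentially sound: in the coupling of Theorem~\ref{thm:coupling} the summands $B_k(t_1)$ are independent across $k$, the harmonic-sum asymptotics give the stated mean, variance and covariance $(s_1\wedge t_1)(s_2\wedge t_2)\log n$, and Lindeberg--Feller plus Cram\'er--Wold yields convergence of the finite-dimensional laws to those of $\mathscr B$. (The paper takes a different route even here: it proves the functional CLT for $\mathscr S(n,\cdot)$ first, via cumulants and the law of total cumulance, and then transfers it to $K$ by showing that $\Delta(n,t)=K(n,t)-S(n,t)-1$ satisfies $\sup_t\abs{\Delta(n,t)}/\sqrt{\log n}\to 0$, using that $S(n)-K(n)$ is a.s.\ convergent; so no separate limit theorem for $K$ is ever proved from scratch.)

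The genuine gap is your tightness argument. The ``S-topology'' in this paper is the Straf/Bickel--Wichura metric topology on the multiparameter space $D_2$, which coincides with Skorokhod's $J_1$ when $q=1$; it is \emph{not} Jakubowski's S-topology, which is a one-parameter construction on $D[0,1]$, and there is no version of Jakubowski's criteria (uniform sup-bound plus control of up-down oscillation numbers) available for $D_2$ here. Consequently the tools you invoke do not apply, and monotonicity in $t_2$ plus an $L^2$ maximal inequality does not control the relevant modulus $w''_\delta$ for two-parameter processes: what is actually needed is the Bickel--Wichura block-increment moment condition $C(\beta,\gamma)$ (Theorem~\ref{thm:bickel_wichura3}), i.e.\ a bound such as $\Ex[\abs{X_n(B)}^2\abs{X_n(C)}^2]\leq(\mu(B))^{\beta_1}(\mu(C))^{\beta_2}$ for neighbouring blocks, which for the Bernoulli partial-sum sheet $K(n,\cdot)$ you would have to establish directly. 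Moreover, any such direct verification runs into the boundary problem the paper flags explicitly: for blocks with a corner $t_2$-coordinate in $(0,\log 2/\log n)$ the sum is empty while the centering $t_1t_2\log n$ is not, so $C(\beta,\gamma)$ fails there, and one must use the extension of Theorem~\ref{thm:bickel_wichura3} with the restricted time sets $T_n$ of~\eqref{def:t_n1}. Your proposal addresses neither of these points, so the tightness step as written does not go through; either supply the $C(\beta,\gamma)$ bound for $K(n,\cdot)$ on $T_n$, or follow the paper's shortcut of transferring tightness from $\mathscr S(n,\cdot)$ via the uniformly negligible difference $\Delta(n,t)$.
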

\begin{remark}
	Theorem~\ref{thm:functionalclt_number_of_cycles} holds true both in the natural coupling and in the temporal coupling. To the best of our knowledge, the previous results on the asymptotic behaviour of $K(n, t)$ that we reviewed earlier all correspond to the temporal coupling in our setting.
\end{remark}
The definition of the space $D_q$, $q\in\mathbb N,$ and the S-topology is reviewed in section~\ref{sec:multiparameter_processes}.
We now turn to the second model, the number of segregating sites.

\subsection{Number of segregating sites}\label{subsec:sites}
In large neutral populations of haploid individuals the genealogy of a sample of $n$ individuals is often modeled by the $n$-coalescent $\Pi_n$. A verbal description of this stochastic process is as follows. Picture the individuals in the sample labeled $1, \ldots, n$ with a line of descent emanating from each individual and growing at unit speed. At rate one any pair of individuals merges, i.e.~their lines of descent merge into a single line representing the most recent common ancestor of this pair. After the first merger the process continues with $n-1$ lines of descent following the same dynamics as before. It is clear from this description that the genealogy of a sample of $n$ individuals may be represented as a (random) rooted tree with $n$ leafs labeled $1, \ldots, n.$ Let us now turn to a more formal definition of the $n$-coalescent.

Kingman's coalescent $\Pi=\{\Pi(t), t\geq 0\}$ is a Markov process with state space the set partitions of $\mathbb N$ defined as follows. For any $n\geq 2$ the restriction $\Pi_n^\star=\{\Pi_n^\star(t), t\geq 0\}$ of $\Pi$ to $[n]$ is a Markov process on the partitions of $[n]$ such that a transition from $\pi$ to $\pi'$ occurs at unit rate if $\pi'$ can be obtained by merging two blocks in $\pi$, and no other transitions occur. This is referred to as the natural coupling of the $n$-coalescents $\Pi_n^\star,$ $n\geq 2$. Let $(L_n^\star, n\geq 2)$ denote the sequence of total branch lengths in the natural coupling, where $L_n^\star$ is the length of all branches of the tree corresponding to $\Pi_n^\star$. We now turn to the temporal coupling. Let
\[T_k \coloneqq \inf\{t\geq 0 \colon \card\Pi(t)=k   \}\qquad k\geq 1,\]
be the first time that $\Pi$ reaches a state of $k$ blocks. Let
\begin{align}
\tau_k\coloneqq T_{k-1}-T_{k}\qquad k\geq 2,
\end{align}
denote the time spent by $\Pi$ in a state of $k$ blocks. The inter-arrival times $\tau_2,  \tau_3, \ldots$ form a sequence of independent exponentials such that $\tau_k$ has parameter $\binom{k}{2}.$ The total length of branches of the $n$-coalescent tree in the natural coupling is given by
\begin{align}\label{eq:branch_lengths_natural_coupling}
L_n^\star \coloneqq \sum_{k=2}^n \mathbb B_k(n)\tau_k,
\end{align}
where $\mathbb B_k(n)$ counts the number of branch segments that support leaves with labels in $1, \ldots, n$ among the branch segments present when the coalescent has $k$ branches in total.

Let $\Pi_n^\dagger$ denote the restriction of $\{\Pi(T_n+t), t\geq 0\}$ to $[n]$. It turns out that $\Pi_n^\dagger$ is again a Markov chain. From their definition it is immediate that $\Pi_n^\star$ and $\Pi_n^\dagger$ are equal in distribution. The $(\Pi_n^\dagger, n\geq 2)$ are the $n$-coalescents in the temporal coupling. Moreover, the total length of branches of the $n$-coalescent tree in the temporal coupling is given by
\begin{align}\label{eq:branch_lengths_temporal_coupling}
L_n^\dagger \coloneqq \sum_{k=2}^n k\tau_k.
\end{align}
		
			In addition to the genealogy mutations are modeled as follows. Conditionally given the genealogical tree (or coalescent tree), throw down points onto the branches of the tree (identified with intervals of the real line) according to a Poisson point process that is independent of Kingman's coalescent $\Pi$ and has constant intensity $t_1/2>0$, the so-called mutation rate. Each point of the Poisson process is then interpreted as a mutation affecting any leaf (the individual in the sample) with the property that the unique path connecting the leaf to the root of the tree crosses that mutation. A formal way to define this procedure is to identify Kingman's coalescent with a random ultrametric space on which a Poisson process can then be defined. However, this is beyond the scope of this article, and we refer the interested reader to Evans' lecture notes~\cite{Evans2007} instead.
			
			We restrict ourselves to the infinitely many sites model of Kimura~\cite{Kimura1969}. According to Kimura's model each mutation is thought of as acting on one of infinitely many sites, i.e.~each jump of the Poisson process on the tree introduces a mutation on a site where no mutation was previously observed. For detailed expositions of probabilistic models for the evolution of DNA sequences the interested reader is referred to Durrett~\cite{Durrett2008}, Etheridge~\cite{Etheridge2011}, and Tavar\'{e}~\cite{Tavare2004}.
			
			We can write the number of segregating sites in the $\lfloor n^{t_2}\rfloor$-coalescent as
			\begin{align}\label{eq:sites_sum}
				S(n, t) &= \sum_{k=2}^{\lfloor n^{t_2}\rfloor} M_{k}(t_1).
			\end{align}
			In the temporal coupling, $M^\dagger_{k}(t_1)$ counts the number of segregating sites appearing while $\Pi$ has $k$ blocks. In the natural coupling, $M^\star_{k}(t_1)$ counts the number of segregating sites appearing while $\Pi_n^\star$ has $k$ blocks.
			
			Properties of $M_k(t_1)$ can be readily derived in the temporal coupling. In terms of the coalescent tree, $M_{k}^\dagger(t_1)$ is the number of mutations falling on the $k$ segments of branches of length $\tau_{k}$ each, hence the $M_{2}^\dagger(t_1), M_{3}^\dagger(t_1), \ldots$ are independent. If $\Pi$ is in a state of $k$ blocks, the probability to see a mutation before a merger is $t_1/(t_1+k-1)$ since a mutation occurs at rate $kt_1/2$, whereas a merger happens at rate $\binom{k}{2}.$ Consequently, $M_k(t_1)$ counts the number of failures until the first success in a sequence of independent Bernoulli trials with success probability $(k-1)/(t_1+k-1)$. If we interpret a mutation event as `success' and a coalescent event as `failure', then $M_k(t_1)$ is a geometric random variable supported on $\mathbb N_0\coloneqq\{0, 1, \ldots\}$ with success probability $(k-1)/(t_1+k-1)$ and mean $t_1/(k-1)$. It is now immediate that $S(n)$ has mean $t_1 H_{n-1}$ and variance $t_1^2H_{n-1}^{(2)}+t_1 H_{n-1}.$

			
			Alternatively, the conditional distribution of $M_k^\dagger(t_1)$ given $\tau_k$ is Poisson with parameter $t_1 k\tau_{k}/2$ for which we write
			\begin{align}
			(M_{k}^\dagger(t_1)|\tau_{k}) \sim \Pn(t_1 k\tau_{k}/2).
			\end{align}

Watterson~\cite{Watterson1975} showed a law of large numbers,
\begin{align}
  \frac{S(n)}{\log n}\to_{a.s.} t_1\qquad \text{as } n\to\infty,
\end{align}
and a central limit theorem
\begin{align}\label{eq:segregating_sites_clt}
  \frac{S(n)-\Ex S(n)}{\sqrt{\Var(S(n))}}\to_d N\qquad \text{as }n\to\infty,
\end{align}
where $N$ denotes a standard Gaussian random variable.

Our next result, Theorem~\ref{thm:functialclt_segregating_sites}, also relies on the coupling provided in section~\ref{sec:coupling}. It generalises and unifies the asymptotic results on $S(n)$ of Watterson cited above.
\begin{theorem}[Functional CLT for the number of segregating sites]\label{thm:functialclt_segregating_sites}
  As $n\to\infty$ we have weak convergence of processes
  \begin{align}\label{eq:brownian_sheet_fdd}
    \left\{\frac{S(n, t)-t_1t_2\log n}{\sqrt{\log n}}, t\in\mathbb [0, 1]^2\right\} \to \{\mathscr B(t), t \in\mathbb [0, 1]^2\}
  \end{align}
  on the space $D_2$ in the S-topology, where $\mathscr B$ is a one-dimensional Brownian sheet.
\end{theorem}

\subsection{Coupling and joint convergence}\label{sec:coupling}
Our first main result, Theorem~\ref{thm:coupling}, entails two couplings of $K(n, t)$ and $S(n, t)$ for all parameters $n\geq 2$, $t\in\mathbb R_+^2$.


Let $\mathfrak P$ denote a Poisson point process on $\mathbb R_+^2$ with intensity $\lambda\otimes \frac{1}{2}\lambda$, where $\lambda$ denotes Lebesgue measure on $\mathbb R,$ such that $\mathfrak P$ is independent of Kingman's coalescent $\Pi$. Let $L_n$ denote the total branch length of the $n$-coalescent tree. Now, for any $t\in \mathbb R_+^2$ and $k\geq 2$ set
\begin{align}
  M_k(t_1) \coloneqq \card(\mathfrak P\cap [L_{k-1}, L_k)\times [0, t_1)),
\intertext{and}
  B_k(t_1)\coloneqq \mathbf 1\{M_k(t_1)\geq 1\}.
\end{align}
This definition entails two different couplings depending on whether the sequence $(L_n, n\geq 2)$ is considered in the natural coupling or in the temporal coupling.

\begin{theorem}
The number of mutations appearing while the coalescent has $k$ lineages is equal in distribution to $M_k(t_1)$. The r.v.~$M_k(t_1)$ is geometric with success parameter $(k-1)/(t_1+k-1)$. The r.v. $B_k(t_1)$ follows a Bernoulli law such that $B_k(t_1)$ has success parameter $t_1/(t_1+k-1).$ Moreover,
\begin{align}\label{eq:definitions_k_s}
  K(n, t) &=_d 1+\sum_{k=2}^{\lfloor n^{t_2}\rfloor}B_k(t_1), \\
  S(n, t) &=_d \card(\mathfrak P\cap [0, L_k)\times [0, t_1))= \sum_{k=2}^{\lfloor n^{t_2}\rfloor} M_k(t_1).
\end{align}
\end{theorem}
Henceforth we take~\eqref{eq:definitions_k_s} as the definition of $K(n, t)$ and $S(n, t)$. Notice that in both couplings $K(n, t)\leq 1+S(n,t )$ almost surely.

We turn to our second main result.
\begin{theorem}
  As $n\to\infty$ we have weak convergence of processes
\begin{align}
  \left\{\frac{(K(n, s), S(n, t))-(s_1s_2, t_1t_2)\log n}{\sqrt{\log n}}, s, t\in [0, 1]^2\right\}\to\{(\mathscr B(s), \mathscr B(t)), s, t\in [0, 1]^2\},
\end{align}
  on the space $D_4$ in the S-topology, where $\mathscr B$ is a one-dimensional Brownian sheet on the unit square.
\end{theorem}

\section{Preliminaries}\label{sec:preliminaries}
  In this section we review some notions and facts from multiparameter processes and their convergence criteria.


\subsection{Multiparameter processes and convergence criteria}\label{sec:multiparameter_processes}
We borrow some notation and terminology from~\cite{BickelWichura1971} on multiparameter processes.

For $q\in\mathbb N$ let $T_1, \ldots, T_q$ denote subsets in $[0, 1]$, each of which contains $\{0, 1\}$ and is either finite or $[0, 1].$ Set $T\coloneqq T_1\times T_2\times\cdots\times T_q$, and call $\bigcup_{p=1}^q T_1\times \cdots\times T_{p-1}\times\{0\}\times T_{p+1}\times\cdots\times T_q$ the lower boundary of $T.$ Let $X=\{X(t)\colon t\in T\}$ be a stochastic process whose state space is some linear space $E$ (in our case this will be $\mathbb R$) endowed with a norm $\norm\cdot$. The sample paths of $X$ are assumed to be smooth enough to permit each of the supremal quantities defined below to be computed by running the time indices involved through countable dense subsets. For any $p$ and any $t\in T_p$ define $X(t)^{(p)}\colon T_1\cdots T_{p-1}\times T_{p+1}\times\cdots\times T_q\to E$ by
\begin{align*}
  X(t)^{(p)}(t_1, \ldots, t_{p-1}, t_{p+1}, \ldots, t_q)\coloneqq X(t_1, \ldots, t_{p-1}, t, t_{p+1}, \ldots, t_q). 
\end{align*}

A block $B$ in $T$ is a subset of the form $(s, t]=\bigtimes_p (s_p, t_p]$ with $s=(s_1, \ldots, s_q),$ $t=(t_1, \ldots, t_q)\in T.$ The set $\bigtimes_{\rho\neq p} (s_\rho, t_\rho]$ is called the $p$th face of $B=(s, t].$ Disjoint blocks $B, C$ are $p$-neighbours if they abut and have the same $p$th face. For any block $B=(s, t]$ let
\begin{align}
  X(B)\coloneqq \sum_{\epsilon_1\in\{0, 1\}}\cdots \sum_{\epsilon_q\in\{0, 1\}} (-1)^{q-\sum_p\epsilon_p} X(s_1+\epsilon_1(t_1-s_1), \ldots, s_q+\epsilon_q(t_q-s_q))
\end{align}
be the increment of a stochastic process $\{X(t), t\in T\}$ around $B$.

Let $\beta>1, \gamma>0$, and let $\mu$ be a finite measure on $T.$ We assume that $\mu$ assigns measure zero to the lower boundary of $T.$ Say that $(X, \mu)$ satisfies condition $C(\beta, \gamma),$ if
\begin{align}
  \Prob{m(B, C)\geq\lambda}\leq\lambda^{-\gamma}(\mu(B\cup C))^\beta
\end{align}
for all $\lambda>0$ and every pair of neighbouring blocks $B, C$ in $T,$  where $m(B, C)\coloneqq \min(\abs{X(B)}, \abs{X(C)})$. Condition $C(\beta, \gamma)$ is implied by the frequently used moment condition
\begin{align}
  \Ex[\abs{X(B)}^{\gamma_1}\abs{X(C)}^{\gamma_2}]\leq (\mu(B))^{\beta_1}(\mu(C))^{\beta_2},
\end{align}
where the gammas and betas satisfy $\gamma_1+\gamma_2=\gamma,$ $\beta_1+\beta_2=\beta$.

We turn to criteria for weak convergence of multiparameter processes as can be found in~\cite{BickelWichura1971}. 

A function $x\colon T\to\mathbb R$ is called a step function if $x$ is a linear combination of functions of the form $t\mapsto \mathbf 1_{E_1\times\cdots\times E_q}(t),$ where each $E_p$ is either a left-closed, right-open subinterval of $[0, 1]$, or the singleton $\{1\}$ and where $\mathbf 1_E$ denotes the indicator of the set $E$. Then $D_q$ is defined to be the uniform closure of the vector subspace of simple functions in the space of all bounded functions from $T$ to $\mathbb R$. We work with a metric topology on $D_q$ (S-topology) which coincides with Skorokhod's topology $J_1$ for $q=1$. Let $\Lambda$ denote the group of transformations $\lambda\colon T\to T$ of the form $\lambda(t_1, \ldots, t_q)=(\lambda_1(t_1), \ldots, \lambda_q(t_q))$, where each $\lambda_p\colon [0, 1]\to [0, 1]$ is continuous, strictly increasing, and fixes zero and one. The Skorokhod distance between $x, y\in D_q$ is defined via
\begin{align*}
  d(x, y)\coloneqq \inf\{\min(\norm{x-y\lambda}, \norm{\lambda})\colon \lambda\in\Lambda\},
\end{align*}
where $\norm{x-y\lambda}\coloneqq\sup\{\abs{x(t)-y(\lambda(t))}\colon t\in T\}$ and $\norm\lambda\coloneqq \sup\{\abs{\lambda(t)-t}\colon t\in T\}.$ $D_q$ is separable with respect to the metric topology and complete, and the Borel sigma-algebra $\mathscr D_q$ coincides with the sigma algebra generated by the coordinate mappings, cf.~\cite{Billingsley1968, Straf1969}. A sequence $(X_n)_{n\geq 1}$ of $D_q$-valued processes is said to converge in the S-topology to a $D_q$-valued process $X,$ written $X_n\to X,$ if $\Ex f(X_n)\to \Ex f(X)$ for all S-continuous bounded functions $f\colon D_q\to \mathbb R$.

We will now present some criteria of weak convergence of $D_q$-valued processes from Bickel and Wichura~\cite{BickelWichura1971}. For any finite $S\subseteq T$ define $\pi_S\colon D_q\to \mathbb R^S$ by $\pi_S(x)=(x(s))_{s\in S}$. Let $\mathscr T$ denote the collection of subsets of $T$ of the form $U_1\times\cdots\times U_q$, where each $U_p$ contains zero and one and has countable complement. For each $D_q$-valued process $X$ set
\[T_X\coloneqq \{t\in T\colon \pi_{\{t\}} \text{ is continuous a.s.~with respect to the law of $X$ on $(D_q, \mathscr D_q)$} \}.\]
\cite{Billingsley1968, Straf1969} show $T_X\in \mathscr T$.
 A partition of $T$ is called a $\delta$-grid if each element of the partition is a ``left-closed, right-open'' rectangle of diameter at least $\delta>0$, and define $w_\delta'\colon D_q\to\mathbb R$ by
\[w_\delta'(x) \coloneqq \inf_\Delta \max_{G\in\Delta} \sup_{s, t\in G} \abs{x(t)-x(s)},\]
where the infimum is taken over all $\delta$-grids $\Delta$ in $T$.

Let $(X_n)_{n\geq 1}$, and $X$ be $D_q$-valued processes, and suppose that $X$ is continuous at the upper boundary of $T.$
\begin{theorem}[\cite{Straf1972, BickelWichura1971}]
  Let $(X_n)_{n\geq 1}$ be $D_q$-valued processes. In order that the sequence $(X_n)$ converges weakly, it is necessary and sufficient that
  \begin{itemize}
    \item $(\pi_S(X_n))$ converges weakly, for all finite subsets $S$ of some member $\tau$ of $\mathcal T$, and
    \item $plim_\delta\lim_n w_\delta'(X_n)=0$
  \end{itemize}
  and then $X_n\to_d X$, where the distribution of the $D_q$-valued process $X$ is determined by $\pi_S(X_n)\to\pi_S(X)$ for all finite $S\in\tau\cap T_X.$ Here the second condition means $\lim_{\delta\downarrow 0}\limsup_n \Prob{w_\delta'(X_n)\geq \varepsilon}=0$ for all $\varepsilon>0$.
\end{theorem}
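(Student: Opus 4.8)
The plan is to deduce this criterion from two classical pillars of weak-convergence theory applied to the Polish space $(D_q, d)$: Prohorov's theorem, which on a separable complete metric space identifies tightness with relative compactness, and a determining-class argument that pins down any limit law through its finite-dimensional projections. Since the excerpt records that $D_q$ is separable and complete and that $\mathscr D_q$ is generated by the coordinate maps, both pillars are available, and the role of the modulus $w_\delta'$ is precisely to serve as the multiparameter analogue of Billingsley's oscillation modulus in the Arzelà–Ascoli characterisation of relatively compact subsets of $D_q$.

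The first and central step is to establish such a compactness characterisation: a set $A\subseteq D_q$ has compact closure if and only if $\sup_{x\in A}\norm{x}<\infty$ and $\lim_{\delta\downarrow 0}\sup_{x\in A} w_\delta'(x)=0$. Granting this, I would prove sufficiency as follows. The hypothesis $\operatorname{plim}_\delta\lim_n w_\delta'(X_n)=0$, read as $\lim_{\delta\downarrow 0}\limsup_n\Prob{w_\delta'(X_n)\geq\varepsilon}=0$, together with a uniform-in-$n$ bound on $\norm{X_n}$ (itself a consequence of the weak convergence of $\pi_S(X_n)$ at a finite grid of points), lets one exhibit for each $\eta>0$ a compact $K_\eta\subseteq D_q$ with $\inf_n\Prob{X_n\in K_\eta}\geq 1-\eta$; hence $(X_n)$ is tight and, by Prohorov, relatively compact. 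Any subsequential weak limit $X'$ then has, by the mapping theorem applied to the almost surely continuous projections $\pi_S$ with finite $S\subseteq\tau\cap T_X$, finite-dimensional distributions equal to the prescribed limits of $\pi_S(X_n)$. Because $\tau\cap T_X$ still has countable coordinate complements, the maps $\{\pi_S\colon S\subseteq\tau\cap T_X\text{ finite}\}$ separate laws on $(D_q,\mathscr D_q)$, so all subsequential limits coincide; relative compactness together with a unique limit point forces $X_n\to_d X$.

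For necessity I would run the argument in reverse. If $X_n\to_d X$ then $(X_n)$ is relatively compact, hence tight, so by the same compactness characterisation the quantity $\lim_{\delta\downarrow 0}\limsup_n\Prob{w_\delta'(X_n)\geq\varepsilon}$ vanishes—one extracts this from tightness by observing that a compact set carrying most of the mass has uniformly small modulus $w_\delta'$. The finite-dimensional convergence $\pi_S(X_n)\to\pi_S(X)$ for finite $S\subseteq T_X$ is then the mapping theorem applied to the projections, which are continuous off a null set of the limit law exactly when $S\subseteq T_X$; the cited fact $T_X\in\mathscr T$ guarantees that these projection sets form a nonempty member of the class over which the criterion quantifies.

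The main obstacle is the compactness characterisation in the $q$-parameter setting under the S-topology. For $q=1$ this is Billingsley's Arzelà–Ascoli theorem for $D[0,1]$, but for $q\geq 2$ one must control oscillation simultaneously along all coordinate directions over $\delta$-grids of rectangles and reconcile this with the product jump structure encoded by the block increments $X(B)$ and the $p$-neighbour relation. Verifying that $w_\delta'$ indeed metrises the correct relative-compactness notion—that a small modulus forces the existence of the requisite one-sided limits across every face of every block—is the technical heart; once it is in place, the Prohorov and determining-class arguments are routine adaptations of the one-parameter theory.
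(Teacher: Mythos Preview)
The paper does not prove this theorem; it is stated in Section~\ref{sec:multiparameter_processes} as a background result quoted from \cite{Straf1972, BickelWichura1971}, with no argument supplied. There is therefore nothing in the paper to compare your proposal against.

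That said, your outline follows exactly the route taken in the original sources: Prohorov's theorem on the Polish space $(D_q,d)$ reduces weak convergence to tightness plus identification of subsequential limits, tightness is characterised via an Arzel\`a--Ascoli type criterion for relative compactness in $D_q$ expressed through the modulus $w_\delta'$, and uniqueness of the limit is secured by the fact that the coordinate maps generate $\mathscr D_q$ so that finite-dimensional projections indexed by a set in $\mathscr T$ form a determining class. You are also right that the substantive work lies entirely in the compactness characterisation for $q\geq 2$, which is the contribution of Straf; once that is in hand the remainder is a routine transcription of the one-parameter argument in \cite{Billingsley1968}. One minor point: the uniform bound on $\norm{X_n}$ that you invoke for tightness does not follow from convergence of $\pi_S(X_n)$ at finitely many grid points alone---one needs to combine the grid values with the smallness of $w_\delta'$ to control the supremum between grid points, exactly as in the one-parameter case.
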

Now define $w_\delta''\colon D_q\to\mathbb R$ by
\[w_\delta''(x)\coloneqq \max_p w_\delta''^{(p)}(x),\]
where
\[w_\delta''(x)\coloneqq \sup\{\min(\norm{x(t)^{(p)}-x(s)^{(p)}}, \norm{x(u)^{(p)}-x(t)^{(p)}})\colon s\leq t\leq u, u-s\leq \delta\},\]
for $p\in[q].$ Here for each $p\in[q]$ and $t\in T_p$ $x(t)^{(p)}$ is defined by
\[x(t)^{(p)}(t_1, \ldots, t_{p-1}, t_{p+1}, \ldots, t_q)\coloneqq x(t_1, \ldots, t_{p-1}, t_p, t_{p+1}, \ldots, t_q).\]
A corollary from this result is as follows.

\begin{corollary}
  Let $(X_n)_{n\geq 1}$ and $X$ be $D_q$-valued processes, and suppose that $X$ is continuous at the upper boundary of $T$. Then in order that $X_n\to X$, it is necessary and sufficient that
  \begin{align}\label{eq:tightness_condition}
    \pi_S(X_n)\to\pi_S(X) \text{ for all finite subsets $S$ of some member $\tau$ of $\mathcal T$},\\\notag
    plim_\delta\lim_n w_\delta''(X_n) = 0.
  \end{align}
\end{corollary}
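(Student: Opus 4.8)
The plan is to derive this corollary from the preceding Theorem, whose statement is identical except that the grid modulus $w_\delta'$ appears in place of the coordinatewise modulus $w_\delta''$. The finite-dimensional convergence requirement---the first line of~\eqref{eq:tightness_condition}---is word for word the same as in the Theorem, so nothing needs to be done there, and the whole matter reduces to showing that for any sequence $(X_n)$ the tightness condition $\operatorname{plim}_\delta\lim_n w_\delta''(X_n)=0$ is equivalent to the Theorem's condition $\operatorname{plim}_\delta\lim_n w_\delta'(X_n)=0$. I would prove this by establishing a pair of deterministic inequalities between the two moduli, valid for every $x\in D_q$, and then transport them through the $\operatorname{plim}_\delta\lim_n$.

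The easy inequality is $w_\delta''(x)\le w_\delta'(x)$. Recall $w_\delta''=\max_p w_\delta''^{(p)}$, where $w_\delta''^{(p)}$ is a coordinatewise modulus in the $p$th direction. Fix a product $\delta$-grid $\Delta$ whose mesh in each coordinate is at least $\delta$, and take scalars $s\le t\le u$ in $T_p$ with $u-s\le\delta$. Since the $p$th mesh is at least $\delta$, the interval $[s,u]$ crosses at most one boundary of $\Delta$ in the $p$th direction, and which of the pairs $\{s,t\}$, $\{t,u\}$ avoids that boundary is determined by $s,t,u$ alone, independently of the remaining coordinates. Hence either $\norm{x(s)^{(p)}-x(t)^{(p)}}$ or $\norm{x(t)^{(p)}-x(u)^{(p)}}$ is at most $w_\delta'(x)$, because for every value of the remaining coordinates the relevant pair of points lies in a single cell of $\Delta$, whose oscillation is bounded by $w_\delta'(x)$. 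Passing to the supremum over triples, the maximum over $p$, and the infimum over grids gives $w_\delta''\le w_\delta'$, and with it the necessity half: if $X_n\to X$ the Theorem yields $\operatorname{plim}_\delta\lim_n w_\delta'(X_n)=0$, whence the same holds for $w_\delta''$.

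The substantive direction is the reverse estimate, of the schematic form $w_\delta'(x)\le C\,w_{c\delta}''(x)+R_\delta(x)$ with constants $c,C$ depending only on $q$ and a remainder $R_\delta$ supported near the upper boundary of $T$. Here I would adapt the classical one-dimensional argument that reads the locations of the \emph{large jumps} of $x$ off the min-increment modulus $w''$ and uses them to build an explicit grid on which the within-cell oscillation is controlled. The passage to $q$ parameters is handled one coordinate at a time: fixing the other $q-1$ coordinates turns $w_\delta''^{(p)}$ into an ordinary univariate Skorokhod modulus of the section-valued path in the $p$th direction, so the univariate jump-location construction applies fibrewise, and the product structure of $T=T_1\times\cdots\times T_q$ lets the $q$ one-dimensional partitions be assembled into a single $\delta$-grid. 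The remainder $R_\delta$ collects the oscillation in the cells abutting the upper boundary, where $w''$ gives no control because there is no room for a point $u$ beyond $t$; this is exactly where the hypothesis that $X$ is continuous at the upper boundary enters, forcing $\operatorname{plim}_\delta\lim_n R_\delta(X_n)=0$ once the finite-dimensional distributions have converged.

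The main obstacle I anticipate is precisely this reverse comparison and its bookkeeping. In one dimension it is the standard equivalence of the two Skorokhod moduli underlying Billingsley's tightness criteria, but in the $q$-parameter setting one must check that locating jumps fibrewise and then forming a product grid does not lose control of the oscillation across the faces of the resulting blocks, and that the constants stay uniform in $n$. Once the two inequalities are in hand, the probabilistic conclusion is immediate: inserting $w_\delta''\le w_\delta'$ and the reverse bound into $\operatorname{plim}_\delta\lim_n(\cdot)$ and discarding $R_\delta$ via the upper-boundary continuity of the limit shows that the two tightness conditions coincide, so the corollary follows from the Theorem together with its identical finite-dimensional clause.
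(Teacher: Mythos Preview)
The paper does not prove this corollary at all: it appears in Section~\ref{sec:multiparameter_processes} (Preliminaries), where the author is merely quoting the convergence criteria of Bickel and Wichura~\cite{BickelWichura1971} and Straf~\cite{Straf1972} without proof, and the corollary is simply lifted from that source. So there is no ``paper's own proof'' to compare against.

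That said, your plan is the standard one and is essentially what Bickel and Wichura do. A few remarks. Your easy inequality $w_\delta''\le w_\delta'$ is correct once one reads ``$\delta$-grid'' as a product grid whose one-dimensional subintervals all have length at least $\delta$ (this is what Bickel--Wichura mean; the paper's phrase ``diameter at least $\delta$'' is a slightly loose paraphrase). With that reading your argument goes through. For the reverse direction your description is accurate but schematic: the actual construction in the multiparameter case builds, for each coordinate $p$, a partition of $T_p$ from the large ``jumps'' of the section-valued map $t\mapsto x(t)^{(p)}$, and the key point---which you flag but do not carry out---is that the oscillation of $x$ over a product cell is controlled by the sum over $p$ of the coordinatewise oscillations of the $x(\cdot)^{(p)}$, via a telescoping identity. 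The boundary remainder $R_\delta$ you introduce is handled in Bickel--Wichura not by invoking continuity of the limit $X$ directly, but by an additional modulus $v_\delta$ measuring oscillation near the upper faces; continuity of $X$ at the upper boundary is what forces $\operatorname{plim}_\delta\lim_n v_\delta(X_n)=0$ given the finite-dimensional convergence. Your outline is on the right track but would need these details filled in to be a proof.
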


Here is another criterion that is more useful for calculations.

\begin{theorem}[Theorem 3 in~\cite{BickelWichura1971}]\label{thm:bickel_wichura3}
  Suppose that each $X_n$ vanishes along the lower boundary of $T$, and that there exist constants $\beta>1,$ $\gamma>0$ and a finite nonnegative measure $\mu$ on $T$ with continuous marginals such that $(X_n, \mu)$ satisfies condition $C(\beta, \gamma)$ for each $n$. Then the tightness condition~\eqref{eq:tightness_condition} holds.
\end{theorem}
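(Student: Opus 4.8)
The plan is to establish the modulus-of-continuity half of the tightness condition, namely $\lim_{\delta\downarrow0}\limsup_n\Prob{w_\delta''(X_n)\ge\varepsilon}=0$ for every $\varepsilon>0$, by a multiparameter version of the Chentsov--Billingsley chaining argument (condition $C(\beta,\gamma)$ speaks only of increments, so it delivers precisely this $\mathrm{plim}$ statement, the finite-dimensional convergence being supplied separately). Since $w_\delta''=\max_p w_\delta''^{(p)}$ is a maximum over the finitely many coordinate directions $p\in[q]$, it suffices to bound each $\Prob{w_\delta''^{(p)}(X_n)\ge\varepsilon}$ separately and take a union bound. Two structural features drive the argument: first, that $C(\beta,\gamma)$ controls not a single increment but the minimum $m(B,C)=\min(\abs{X(B)},\abs{X(C)})$ over a pair of $p$-neighbouring blocks, which is exactly the min-structure in the definition of $w_\delta''^{(p)}$; and second, that $\beta>1$, which furnishes the geometric summability needed to sum the chaining estimates over a dyadic family of grids. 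The hypothesis that each $X_n$ vanishes along the lower boundary is used to anchor the chains, so that slab increments correspond to genuine slice differences.

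First I would fix a direction $p$ and establish the central maximal inequality. Subdividing a block along direction $p$ into consecutive sub-blocks $B_1,\dots,B_m$ sharing a common $p$th face, set $\xi_j\coloneqq X_n(B_1\cup\cdots\cup B_j)$ (with $\xi_0=0$), so that for each $j$ the one-sided increments $\xi_j$ and $\xi_m-\xi_j$ are increments over the $p$-neighbouring blocks $B_1\cup\cdots\cup B_j$ and $B_{j+1}\cup\cdots\cup B_m$, to which $C(\beta,\gamma)$ applies directly. The goal is the Chentsov-type bound
\[
  \Prob{\max_{0\le j\le m}\min(\abs{\xi_j},\,\abs{\xi_m-\xi_j})\ge\lambda}\le\frac{K}{\lambda^{\gamma}}\bigl(\mu(B_1\cup\cdots\cup B_m)\bigr)^{\beta},
\]
with $K=K(\beta,\gamma)$ independent of $m$ and of the subdivision. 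This is proved by the standard binary-splitting recursion: one splits the chain at a midpoint, applies $C(\beta,\gamma)$ to the two halves together with the superadditivity of $\mu$ along the $p$-direction, and uses $\beta>1$ to close the recursion into a convergent geometric series. The minimum inside the probability is exactly what lets the single excursion that can be large on each side be absorbed, mirroring the classical one-parameter estimate.

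Next I would pass from the maximal inequality to a bound on the full modulus. Partitioning $T$ into a dyadic grid whose mesh in the $p$-direction is of order $\delta$ and applying the maximal inequality to each column yields
\[
  \Prob{w_\delta''^{(p)}(X_n)\ge\varepsilon}\le\frac{K}{\varepsilon^{\gamma}}\sum_{\text{slabs }\Delta}\bigl(\mu(\Delta)\bigr)^{\beta},
\]
the sum running over the thin slabs cut out by the grid. Writing $\bigl(\mu(\Delta)\bigr)^{\beta}\le(\max_\Delta\mu(\Delta))^{\beta-1}\sum_\Delta\mu(\Delta)\le(\max_\Delta\mu(\Delta))^{\beta-1}\mu(T)$ and invoking $\beta>1$, the right-hand side is at most a constant times $(\max_\Delta\mu(\Delta))^{\beta-1}$. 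Because $\mu$ has continuous marginals, the maximal $\mu$-mass of a slab of thickness $\delta$ in direction $p$ tends to $0$ as $\delta\downarrow0$, uniformly, so this bound vanishes as $\delta\downarrow0$, uniformly in $n$; taking the maximum over the finitely many $p$ finishes the estimate for $w_\delta''$.

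The main obstacle is the multiparameter bookkeeping hidden in the passage above: the slice increment $\norm{X_n(t)^{(p)}-X_n(s)^{(p)}}$ entering $w_\delta''^{(p)}$ is a \emph{supremum} over the remaining $q-1$ transverse coordinates, whereas $C(\beta,\gamma)$ controls only increments over full blocks. Reconciling these requires controlling the oscillation of $X_n$ over entire transverse faces, which I would handle by induction on the dimension $q$ (equivalently, a simultaneous second chaining in the transverse directions), again using the min-structure of $C(\beta,\gamma)$ together with the summability from $\beta>1$ to convert control of block increments into control of the transverse supremum. Finally, I note that in practice $C(\beta,\gamma)$ is verified through the moment condition recorded after its definition, since $\Ex[\abs{X(B)}^{\gamma_1}\abs{X(C)}^{\gamma_2}]\le(\mu(B))^{\beta_1}(\mu(C))^{\beta_2}$ with $\gamma_1+\gamma_2=\gamma$ and $\beta_1+\beta_2=\beta$ gives $C(\beta,\gamma)$ by Markov's inequality applied to the bound $m(B,C)^{\gamma}\le\abs{X(B)}^{\gamma_1}\abs{X(C)}^{\gamma_2}$.
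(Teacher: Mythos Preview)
The paper does not contain a proof of this theorem: it is quoted verbatim as Theorem~3 of Bickel and Wichura~\cite{BickelWichura1971} and used as a black box (together with the extension stated immediately after it) in the proofs of Theorems~\ref{thm:functionalclt_number_of_cycles}, \ref{thm:functialclt_segregating_sites} and~\ref{thm:main_result}. There is therefore no ``paper's own proof'' against which to compare your attempt.

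That said, your sketch is a reasonable outline of the original Bickel--Wichura argument: the reduction to a single coordinate direction~$p$, the Chentsov-type maximal inequality for $\min(\abs{\xi_j},\abs{\xi_m-\xi_j})$ obtained by binary splitting, the passage from the maximal inequality to a bound on $w_\delta''^{(p)}$ via a dyadic grid, and the use of $\beta>1$ together with continuity of the marginals of $\mu$ to make the slab sum vanish as $\delta\downarrow 0$ are all the right ingredients. The point you flag as the ``main obstacle''---that $w_\delta''^{(p)}$ involves a supremum over the transverse coordinates while $C(\beta,\gamma)$ only bounds block increments---is indeed where the work lies, and Bickel and Wichura handle it essentially as you suggest, by an inductive/iterated maximal inequality in the transverse directions. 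If you want to turn your sketch into a full proof you should consult~\cite{BickelWichura1971} directly; for the purposes of the present paper, however, no proof is required.
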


We refer to the above convergence criteria as the Bickel-Wichura conditions.

\subsection{Brownian sheet}
Recall that a Gaussian process on a non-empty index set $T$ is a family $X\coloneqq \{X(t)\colon t\in T\}$ of Gaussian random variables such that for any sequence $t_1, \ldots, t_d\in T,$ $d\in\mathbb N,$ and any $a\in\mathbb R^d,$ $\sum_{k=1}^d a_kX(t_k)$ follows a Gaussian law. We say that $X$ is centred if all its marginals have zero mean. A one-dimensional Brownian sheet on $\mathbb R_+^2\coloneqq [0, \infty)^2$ is a two-parameter, centred Gaussian process $\mathscr B=\{\mathscr B(t), t\in\mathbb R_+^2\}$ with covariance function
\begin{align}
  \Cov(\mathscr B(s), \mathscr B(t)) &= \min(s_1, t_1)\min(s_2, t_2)\qquad\text{ for all }s, t\in\mathbb R_+^2.
\end{align}

\section{proofs}\label{sec:proofs}
We turn to the proofs of our results.

\begin{proof}(Proof of theorem~\ref{thm:coupling})
Recall that in the temporal coupling $L_k^\dagger-L_{k-1}^\dagger=k\tau_k$ is the total length of branches created when the coalescent has exactly $k$ lineages, thus $(M_k^\dagger(t_1)|\tau_k)$ obeys a Poisson law with parameter $k\tau_k t_1/2,$ in particular, $(M_k^\dagger(t_1))_{2\leq k\leq n}$ is a sequence of independent random variables. It is well known that Poisson mixture distributions with mixing parameter following an exponential law are geometric, cf.~\cite[page 212]{JohnsonKempKotz2005}. Thus $M_k^\dagger(t_1)$ obeys a geometric law with success parameter $(k-1)/(t_1+k-1)$. Setting $B_k^\dagger(t_1)\coloneqq \mathds{1}\{ M_k^\dagger(t_1)\geq 1  \},$ $k\geq 2,$ we see that
\begin{align}
  K(n, (t_1, 1)) \coloneqq 1+B_2^\dagger(t_1)+\cdots+B_n^\dagger(t_1)
\end{align}
is equal in distribution to the number of cycles in a $(n, t_1)$ Ewens permutation given by Feller's coupling.
\end{proof}

\subsection{Total branch lengths}
We study the asymptotic behaviour of $S(n, t).$ To this end, we rely on its representation via the Poisson point process $\mathfrak P$. In particular, we will condition on the total branch lengths $L_n$. Understanding the asymptotic behaviour of $L_n$ both in the natural coupling and in the temporal coupling is crucial for our proof of Theorem~\ref{thm:functialclt_segregating_sites}.

It is well-known that in the temporal coupling one has convergence a.s.~and in $L^p$ for any $p>0,$
\begin{align}
  L_n^\dagger -2\log n \to 2\mathscr L^\dagger,
\end{align}
as $n\to\infty,$ where the r.v.~$\mathscr L^\dagger$ obeys a Gumbel law, cf.~\cite{MoehlePitters2015}. Moreover, in the temporal coupling, cf.~\eqref{eq:branch_lengths_temporal_coupling}, we see that $L_n^\dagger-2H_{n-1}$ is a mean-square bounded martingale, thus $L_n^\dagger=2\log n+O_p(1).$

In the natural coupling, Pfaffelhuber, Wakolbinger and Weisshaupt~\cite{PfaffelhuberWakolbingerWeisshaupt2011} showed convergence in $L^2$
\begin{align}
  L_n^\star-2\log n\to 2\mathscr L^\star,
\end{align}
as $n\to\infty,$ where $\mathscr L^\star$ obeys a Gumbel law. We show that this convergence also holds almost surely, and that the limits $\mathscr L^\dagger$ and $\mathscr L^\star$ agree a.s.

Let us recall the hypergeometric distribution. Let $X$ denote the number of white balls in a sample of size $n\leq b+w$ drawn without replacement from a set of $b\in\mathbb N_0$ black and $w\in\mathbb N_0$ white balls. The probability mass function of $X$ is
\[ \Prob{X=k} = \frac{\binom{w}{k}\binom{b}{n-k}}{\binom{b+w}{n}}   \text{ for } k\in\mathbb \{0, \ldots, n\wedge w\}. \]
We say that $X$ obeys a hypergeometric distribution with parameters $(b, w, n)$. Moreover, the r.v.~$X$ has mean $\Ex X=nw/(b+w)$ and variance $\Var(X)=nwb(n-1)/((b+w)^2(b+w-1))$. 

Recall that $\mathbb B_k(n)$ counts the number of branch segments in Kingman's coalescent that support leaves with labels in $1, \ldots, n$ during the time that the coalescent has $k$ branches. Clearly, $\mathbb B_k(n)$ is non-decreasing in both parameters $n$ and $k$. It is known that $\mathbb B_k(n)$ has a hypergeometric distribution with parameters $(n-1, k, n)$, cf.~\cite[Lemma 4.8]{EtheridgePfaffelhuberWakolbinger2006}. 

\begin{lemma}\label{lem:convergence_of_branches}
	Fix a natural number $k\geq 2$. As $n\to\infty,$ we have almost sure convergence
	\begin{align}\label{eq:convergence_of_branches}
	  \mathbb B_k(n)\to k.
	\end{align}
\end{lemma}
\begin{proof}
	First, notice that $\Ex \mathbb B_k(n)=nk/(n+k-1)\to k$ as $n\to\infty.$
Since $\mathbb B_k(n)$ is non-negative, the convergence in~\eqref{eq:convergence_of_branches} holds in $L^1$, and the claim follows.
\end{proof}
In our following proof we will approximate $L_n^\dagger$ and $L_n^\star$ by the following random variables. For $n, m\geq 2$ set
\begin{align}
  L_{n, m} & \coloneqq \sum_{k=2}^m \mathbb B_k(n)\tau_k.
\end{align}
Notice that $L_{n, m}$ is increasing in both parameters $n$ and $m.$
\begin{proposition}
	For fixed $n\geq 2$ we have almost sure convergence $L_{n, m}\uparrow L_n^\star$ as $m\to\infty.$ For fixed $m\geq 2$ we have almost sure convergence $L_{n, m}\uparrow L_m^\dagger$ as $n\to\infty$.
\end{proposition}
\begin{proof}
	For the first claim, notice that $L_{n, m}\to L_n^\star$ in $L^1$ as $m\to\infty$ by a simple calculation. Hence this convergence holds in probability. It also holds almost surely, since $L_{n, m}$ is non-decreasing in $m$.
	The second claim is an immediate application of Lemma~\ref{lem:convergence_of_branches}.
\end{proof}


\begin{proof}(of Theorem~\ref{thm:branch_lengths_convergence})
	It suffices to show almost sure convergence $L_n^\dagger-L_n^\star\to 0$ as $n\to\infty$. Let $(\Omega, \mathcal F, \mathbb P)$ denote the probability space underlying the Kingman coalescent. We define the following sets.
	\begin{enumerate}[(i)]
		\item The sequence $L_n^\dagger-2\log n$ converges almost surely, thus
		  \[ \Omega_1\coloneqq \{ L_n^\dagger \text{ is a Cauchy sequence}  \} \]
		  has $\mathbb P$-measure $1$. Put differently, for any $\omega\in\Omega_1$ there exists $N_1\equiv N_1(\omega)\in\mathbb N$ such that $m, n\geq N_1$ implies
		  \begin{align}
		  \abs{L_n^\dagger(\omega)-L_m^\dagger(\omega)}\leq \varepsilon.
		  \end{align}
		\item Since for any $n\geq 2$ we have $L_{n, m}\uparrow L_n^\star$ almost surely as $m\to\infty$,
			\[\Omega_2\coloneqq \bigcap_{n\geq 2}\{ L_{n, m}\uparrow L_n^\star \text{ as }m\to\infty \} \]
			has $\mathbb P$-measure $1$. For any $\omega\in\Omega_2$ and any $n\geq 2$ there exists $N_2\equiv N_2(n, \omega)\in\mathbb N$ such that $m\geq N_2$ implies
			\begin{align}
			\abs{L_n^\star(\omega)-L_{n, m}(\omega)}\leq \varepsilon.
			\end{align}
			
		\item Since for any $m\geq 2$ we have $L_{p, m}\uparrow L_m^\dagger$ almost surely as $p\to\infty,$
			\[\Omega_3 \coloneqq \bigcap_{m\geq 2}\{ L_{p, m}\uparrow L_m^\dagger  \text{ as } p\to\infty\} = \bigcap_{m\geq 2}\{ L_{p, m} \text{ is a Cauchy sequence in }p \} \]
			also has $\mathbb P$-measure $1$. For any $\omega\in\Omega_3$ and any $m\geq 2$ there exists $N_3\equiv N_3(m, \omega)\in\mathbb N$ such that $p\geq N_3$ implies
			\begin{align}
			\abs{L_{p, m}(\omega)-L_m^\dagger(\omega)}\leq \varepsilon.
			\end{align}
			
	\end{enumerate}
Clearly, $\Omega'\coloneqq \Omega_1\cap\Omega_2\cap\Omega_3$ has $\mathbb P$-measure one. Fix therefore $\omega\in\Omega'$ and $\varepsilon>0$ arbitrarily. We show that for any $n\geq N_1(\omega)$ we have
\[ \abs{L_n^\dagger(\omega)-L_n^\star(\omega)} \leq 3\varepsilon. \]
In fact, choose $m\geq N_1(\omega)\vee N_2(n, \omega)$, and $p\geq N_3(m, \omega)\vee n$. Notice that
\begin{align*}
  L_n^\star(\omega)-L_n^\dagger(\omega) \leq L_n^\star(\omega)-L_{n, m}(\omega)+L_{p, m}(\omega)-L_m^\dagger(\omega)+L_m^\dagger-L_n^\dagger(\omega),
\end{align*}
since $L_{p, m}(\omega)>L_{n, m}(\omega).$ Now,
\begin{align*}
  \abs{L_n^\star(\omega)-L_n^\dagger(\omega)} &\leq \abs{L_n^\star(\omega)-L_{n, m}(\omega)}+\abs{L_{p, m}(\omega)-L_m^\dagger(\omega)}+\abs{L_m^\dagger-L_n^\dagger(\omega)}\\
  &\leq 3\varepsilon.
\end{align*}
\end{proof}
We now have the tools ready to study the asymptotic behaviour of the number of segregating sites.

\subsection{Number of segregating sites}
We now turn to the proofs of our results concerning the asymptotic behaviour of the number of segregating sites.

\begin{proof}(of Theorem~\ref{thm:functialclt_segregating_sites})
	(i) First consider a conditional version of the statement. Let $0=l(1)<l(2)<\cdots$ be real numbers satisfying $l(n)=2\log n+O(1)$ as $n\to\infty.$ For $s, t\in [0, 1]^2$ such that $0\leq s\leq t$ define the block $B$ to be $[s_1, t_1)\times [l(\lfloor n^{s_2}\rfloor), l(\lfloor n^{t_2}\rfloor))$. The random variable
\begin{align*}
  \mathcal S(n, B)\coloneqq \frac{\card(\mathfrak P\cap B)-\lambda^2(B)}{\sqrt{\log n}},
\end{align*}
due to its Poisson nature, converges in distribution to a centred Gaussian random variable, $\mathcal S(B)$ say, as $n\to\infty.$ Also, for disjoint blocks $B_1, B_2, \ldots$ the random variables $\mathcal S(n, B_1), \mathcal S(n, B_2), \ldots$ are independent. Considering linear combinations, we see that the random variables
\begin{align*}
  \mathcal S(n, t)\coloneqq \frac{\card(\mathfrak P\cap [0, l(\lfloor n^{t_2}\rfloor)\times [0, t_1))- t_1t_2\log n }{\sqrt{\log n}}, \qquad t\in [0, 1]^2
\end{align*}
are asymptotically jointly Gaussian with covariance function
\begin{align*}
  \lim_{n\to\infty}\Cov(\mathcal S(n, s), \mathcal S(n, t)) &= \lim_{n\to\infty} \Var(\mathcal S(n, s\wedge t)) = (s\wedge t)_1(s\wedge t)_2, \qquad s, t\in [0, 1]^2. 
\end{align*}
Moreover, for disjoint blocks $B, C$ in the unit square by independence
\begin{align*}
  \Ex[\mathcal S(n, B)^2\mathcal S(n, C)^2] &= \Var(\mathcal S(n, B))\Var(\mathcal S(n, C)) = \lambda^2(B)\lambda^2(C).
\end{align*}
Notice that each $\mathcal S(n, t)$ vanishes along the lower boundary of $T\equiv [0, 1]^2.$ Applying the Bickel-Wichura criterion, we obtain that $\{\mathcal S(n, t), t\in [0, 1]^2\}$ converges in the functional limit sense to the standard Brownian sheet $\{\mathscr B(t), t\in [0, 1]^2\}$.

(ii) We now replace the numbers $l(n)$ by the total branch lengths $L_n$, $n\geq 2$, of the Kingman $n$-coalescent, which are independent of $\mathfrak P$. Set
\begin{align*}
  \mathscr S(n, t)\coloneqq \frac{\card(\mathfrak P\cap [0, L_{\lfloor n^{t_2}\rfloor}) \times [0, t_1))- t_1t_2\log n }{\sqrt{\log n}}, \qquad t\in [0, 1]^2.
\end{align*}
We now distinguish between the temporal coupling and the natural coupling. Since $L_n^\dagger-2H_{n-1}$ is a martingale (cf.~\eqref{eq:branch_lengths_temporal_coupling}) and square mean-bounded, we have $L_n^\dagger=2\log n+O_p(1)$ a.s. On the other hand, as the proof in Theorem~\ref{thm:branch_lengths_convergence} shows,
\begin{align*}
  L_n^\dagger-L_n^\star \to 0
\end{align*}
almost surely as $n\to\infty$. Thus $L_n=2\log n+O_p(1)$ holds true both in the natural coupling and in the temporal coupling

Thus, for any continuous, bounded $f\colon D_2\to\mathbb R$ we have from (i)
\begin{align*}
  \Exc{f(\mathscr S(n))}{L_n, n\geq 2}\to\Ex{f(\mathscr B)},
\end{align*}
and by dominated convergence
\begin{align*}
\Ex[f(\mathscr S(n))]\to\Ex{f(\mathscr B)}.
\end{align*}
This proves Theorem~\ref{thm:functialclt_segregating_sites}.
\end{proof}

\subsection{Joint convergence}
We show our main result, Theorem~\ref{thm:main_result}.

\begin{proof}(Proof of Theorem~\ref{thm:main_result})
	From~\eqref{eq:definitions_k_s} we have
\begin{align*}
  \sup_{n\geq 2, t\in [0, 1]^2} \abs{K(n, t)-S(n, t)} &\leq \sum_{k\geq 2} (M_k(1)-B_k(1)).
\end{align*}

The mean of the right hand side is finite,
\begin{align*}
  \Ex\sum_{k\geq 2} (M_k(1)-B_k(1)) &= \sum_{k\geq 2} \frac{1}{(k-1)k}<\infty,
\end{align*}
thus
\begin{align*}
\sup_{n\geq 2, t\in [0, 1]^2} \abs{K(n, t)-S(n, t)} &<\infty \text{ a.s.}
\end{align*}

Consequently, we may replace $K(n, t)$ by $S(n, t)$ in Theorem~\ref{thm:main_result}, and the claim follows from Theorem~\ref{thm:functialclt_segregating_sites}.
\end{proof}

%
%
%
%
%
%
%

			{\bf Acknowledgements.} The author thanks Zakhar Kabluchko for stimulating discussions and for pointing out~\cite{BickelWichura1971}. He also thanks Leif D\"{o}ring for support.
			
			\bibliographystyle{alpha}
			\bibliography{literature}
			
			\end{document}